\documentclass[12pt]{article}
\usepackage[utf8]{inputenc}
\usepackage{amsmath,amssymb,amsthm}
\usepackage{hyperref}
\usepackage{graphicx}
\usepackage{xcolor}
\renewcommand\le{\leqslant}
\renewcommand\ge{\geqslant}
\newcommand\eps{\varepsilon}
\newcommand\R{\mathbb R}
\newcommand\E{\mathsf E}
\renewcommand\P{\mathsf P}
\DeclareMathOperator\rank{rank}
\DeclareMathOperator\sign{sign}
\DeclareMathOperator\diam{diam}
\DeclareMathOperator\conv{conv}

\usepackage[normalem]{ulem}

\newtheorem{theorem}{Theorem}
\newtheorem{statement}{Statement}

\newtheorem*{corollary}{Corollary}

\theoremstyle{definition}

\newtheorem*{remark}{Remark}

\title{Polynomial approximation on disjoint segments and amplification of approximation}
\author{Yu.~Malykhin, K.~Ryutin}

\begin{document}

\maketitle

\abstract{
We construct explicit easily implementable  polynomial approximations of
sufficiently high accuracy 
for locally constant functions on the union of disjoint segments (see~\eqref{def_en}).  This problem has important applications  in
several areas of numerical analysis,
complexity theory, quantum algorithms, etc. The one, most relevant for us, is
the amplification of approximation method: it allows to construct
approximations of higher degree $M$ and better accuracy from the approximations
of degree $m$.}

\section{Introduction}

\paragraph{The problem setting.}
Let $I_1,\ldots,I_s$ be closed disjoint intervals on the real
line, and $K :=
I_1\sqcup \ldots \sqcup I_s$. We introduce the following quantity:
\begin{equation}
\label{def_en}
    E_n(I_1,\ldots,I_s) := \max_{y_1,\ldots,y_s\in[-1,1]} E_n(\sum_{i=1}^s
    y_i\mathbf{1}_{I_i})_K,
\end{equation}
where $E_n(f)_{K} := \min\limits_{P\in\mathcal P_n}\|f-P\|_{C(K)}$, and
$\mathbf{1}_I$ is the indicator function of $I$. 

In other words, $E_n(I_1,\ldots,I_s)$ is equal to the smallest $\eps$, such
that for any values $y_i$, $|y_i|\le 1$, there exists a real
algebraic polynomial $P$ of degree at most
$n$, such that $|P(t)-y_i|\le \eps$, $t\in I_i$, $i=1,\ldots,s$.

We  also consider the following modification of $E_n$:
\begin{equation}
\label{def_en_bnd}
    E_n^*(I_1,\ldots,I_s) := \max_{y_1,\ldots,y_s\in[-1,1]}
    E_n^*(\sum_{i=1}^s y_i\mathbf{1}_{I_i})_K,
\end{equation}
where $E_n^*(f)_K := \min \{\|f-P\|_{C(K)}\colon P\in\mathcal P_n,\;\|P\|_{C(\conv
K)}\le \|f\|_{C(K)}\}$, i.e. the approximating polynomial is bounded (in
absolute value) on $I_i$ and between them by $\max|y_i|$.

It is obvious that
$$
E_n(I_1,\ldots,I_s) \le E_n^*(I_1,\ldots,I_s).
$$

Our goal is to obtain explicit and rather general upper bounds on $E_n$ and
$E_n^*$ that can be applied in different situations.

\paragraph{Amplification of approximation.}
The main motivation for us is the development of the so-called  amplification of approximation technique. 
Let us fix the set of values $Y=\{y_1<y_2<\ldots<y_s\}$ and let
$0<\delta<\frac12 \min|y_{i+1}-y_i|$. Suppose that there exists a polynomial
$P\in\mathcal P_n$ such that
\begin{equation}
    \label{poly_prop}
    |P(t)-y_j|\le \eps\quad\mbox{for all $t$: $|t-y_j|\le \delta$}. 
\end{equation}
If $f$ is a function with values in the set $Y$ and $g$
approximates $f$ with an error not exceeding $\delta$, then $P\circ g$ will approximate $f$ with
an error $\le\eps$. We denote the smallest $\eps$ such that~\eqref{poly_prop} holds for some
$P\in\mathcal P_n$ as $\eps_n(Y;\delta)$.

If $L$ is any $m$-dimensional linear space of functions and $q\in\mathcal P_n$,
then $\{q\circ f\colon f\in L\}$ lies in
a linear space of dimension at most
\begin{equation}
    \label{M_value}
    M := \sum_{j=0}^n\binom{m+j-1}{j}.
\end{equation}
We remark that $M\le (m+1)^n$.

In fact a similar property implies that $q$
should be a polynomial. 
Namely, if $q\colon\R\to\R$ is measurable and for any linear space $L$
of functions of fixed dimension $m\ge 2$ the dimension of $\mathrm{span}\,\{q \circ f:
f\in L\}$ is bounded by some $C(m)$, then $q$ is a polynomial
(see~\cite{Pink} for the precise result).

We give several examples of the amplification of approximation by
linear spaces or $m$-term approximations. The essence of this
technique is to construct approximations of higher degree $M$
(see~\eqref{M_value}) and better accuracy $\eps := \eps_n(Y,\delta)$ from the
approximations of degree $m$ with accuracy $\delta$.

1. Kolmogorov widths.
We recall the definition of the Kolmogorov width of a set $K$ in a normed space
$X$:
$d_m(K,X) := \inf\limits_{L_m} \sup\limits_{x\in K} \inf\limits_{y\in L_m} \|x-y\|_X$,
where the infimum is taken over linear subspaces of $X$ of dimension at most
$m$.

Let $\mathcal F$ be some class of functions with values in $Y$, then
$$
d_m(\mathcal F,L_\infty)\le \delta\quad\Longrightarrow\quad
d_M(\mathcal F,L_\infty)\le\eps.
$$

2. Approximate rank.
There is a related notion of the approximate rank of a matrix:
$\rank_\eps(A):=\min\{\rank B\colon \max_{i,j}|A_{i,j}-B_{i,j}|\le\eps\}$.

For any matrix $A$ with entries $A_{i,j}\in Y$ we have
    $$
    \rank_\delta(A) \le m \quad\Longrightarrow\quad \rank_\eps(A)\le M.
    $$
Note that elementwise product of rank-one tensors has also rank one, so
    the amplification technique also works for tensors. Let
        $T=(T_{i_1,\ldots,i_d})$ be a $Y$-valued tensor, then
    $$
    \rank_\delta(T) \le m \quad\Longrightarrow\quad \rank_\eps(T)\le M.
    $$

3. $m$-term approximations. We recall that
the best $m$-term approximation by a dictionary $D\subset X$ is defined as
$\sigma_m(x,D)_X := \inf\limits_{\substack{g_1,\ldots,g_m\in D\\c_1,\ldots,c_m\in\R}}
\|x-\sum_{k=1}^m c_k g_k\|_X$.

Let $D$ be some dictionary of functions closed under pointwise 
    multiplication, and let $f$ be a function taking values in $Y$. Then
        $$
        \sigma_m(f,D)_{L_\infty} \le \delta\quad\Longrightarrow\quad
        \sigma_M(f,D)_{L_\infty} \le \eps.
        $$

\paragraph{Properties of $E_n$ and $\eps_n$.}
There is
an obvious relation between $\eps_n$ and $E_n$:
\begin{equation}
    \label{eps_en}
\eps_n(Y;\delta) \le E_n([y_1-\delta,y_1+\delta],\ldots,[y_s-\delta,y_s+\delta])\cdot \frac12|y_s-y_1|.
\end{equation}
Indeed, an appropriate  translation
allows us to deal with the case
$-y_1=y_s=\frac12|y_s-y_1|$ and approximate the function with values $y_j/y_s$
via $E_n([y_1-\delta,y_1+\delta],\ldots,[y_s-\delta,y_s+\delta])$.

The function $y\mapsto E_n(\sum_{i=1}^s y_i\mathbf{1}_{I_i})_K$ is convex,
therefore the maximum in~\eqref{def_en} is attained at some extremal point of
the cube $[-1,1]^s$, i.e. $y\in\{-1,1\}^s$. Similar arguments work for $E_n^*$.

In particular, for $s=2$ we have $E_n(I_1,I_2)=E_n(-\mathbf{1}_{I_1}+\mathbf{1}_{I_2})_{I_1\sqcup
I_2}$ and the inequality~\eqref{eps_en} becomes equality, e.g.
$$
\eps_n(\{-1,1\};\delta) = E_n([-1-\delta,-1+\delta],[1-\delta,1+\delta]).
$$
Moreover, one can use the approximation amplification argument and obtain the bound
\begin{equation}
    \label{en_amplif}
E_{mn}(I_1,\ldots,I_s) \le \eps_m(\{-1,1\}; E_n(I_1,\ldots, I_s)).
\end{equation}
Indeed, it is enough to construct approximation of degree $mn$ for any
function $f=\sum_{i=1}^s y_i\mathbf{1}_{I_i}$, where
$y_1,\ldots,y_s\in\{-1,1\}$. We can take the composition of optimal polynomials
for the quantity $\eps_m$ and the $n$-degree polynomial for $f$.

The analogue of~\eqref{en_amplif} for $E_n^*$ is
\begin{equation}
    \label{en_amplif_bnd}
E_{mn}^*(I_1,\ldots,I_s) \le E_m^*([-1,-1+\eps],[1-\eps,1]),\quad
\eps := E_n^*(I_1,\ldots,I_s).
\end{equation}

\paragraph{Historical remarks.}

The origins  of our problems can be traced to the   classical results by  E.I.
Zolotar\"{e}v, N.I. Akhiezer  on the best  uniform approximation by  rational or
polynomial  functions on the union of two disjoint segments.  Various important
results and methods of rational and polynomial approximations were developed by
many prominent mathematicians including  G.~Szeg\"o, G.~Faber, J.~Walsh, E.~Stiefel,
A.A.~Gonchar. These problems are important for electrical engineering: a very
interesting direction of research is the optimization of multiband electrical
filters (see the survey~\cite{Bogat19} by A.~Bogatyr\"ev for the state of the
art and the references). The main problem of this direction is to study rational
approximations to a piecewise constant function that takes only two values.
   
Polynomial approximations on disjoint segments or more generally
on disjoint Jordan curves  were actively studied since the 60s.  H.~Widom in his
seminal paper~\cite{Widom} developed the necessary machinery for the problem
of polynomial approximation: he studied Chebyshev polynomials for a system of
disjoint Jordan curves in $\mathbb{C}$. 
We would like to mention the papers~\cite{Bogat99},\cite{Schinorm21},\cite{Schi2interv} and references therein where one can learn about  different aspects of this important line of investigations.

W.H.J.~Fuchs used the approach developed by Widom and   showed
\cite{Fuchs80} 
that $E_n(f)$ behaves like $C n^{-1/2} \exp(-\alpha n)$ for functions taking
constant values on the system of disjoint segments on $\mathbb{R}$ where
$\alpha>0$ depends on the geometry of these  intervals. 
  In fact different quantities relevant for the order of
 approximation (such as $\alpha$) or even
the constructions of operators of nearly best polynomial approximation  are
determined by the Green function of the complementary domain to the union of
these segments in $\mathbb{C}$. It is not easy to compute the Green function or
evaluate different quantities that are needed  for these  approximation
problems. Hasson obtained the exact exponent for the
union of $s$ equal segments rotated by some angle $2\pi/s$,
see~\eqref{Hasson}.
We should  mention the papers~\cite{Andr,ET,ShStrWat,Hass} where the
reader can find necessary references concerning some applications and further
developments of this approach.  We will discuss important contributions by
A.~Eremenko and P.~Yuditskii later.

There are also very interesting related problems such as parametric
approximation of piecewise analytic functions~\cite{Kon90} and fast decreasing
polynomials~\cite{IvaTot90}. Several constructions of polynomials that approximate
locally constant functions are given in these papers.

Several authors independently used polynomials for the approximation
amplification.

E.D.~Gluskin (see~\cite{Gluskin}) used the amplification technique in order to get
new lower bounds for the Kolmogorov widths of octahedra in $\ell_\infty^N$. The
structure of this problem allowed him to use simple polynomial $p(t)=t^n$,
that ``amplifies'' approximation of the value $y_1=0$ and
preserves the value $y_2=1$.
Later this result as well as the approach were re-discovered by
N.~Alon~\cite{Alon}, some interesting
applications were also given in this paper.

A.~Klivans and A.~Sherstov~\cite{KS10} used the amplification technique for the
approximation of boolean functions (in fact, Kolmogorov widths of these
classes) and for the approximation rank of boolean (signum) matrices. They used
rather crude polynomial approximation of the function $\sign(t)$ for
$1-\delta\le|t|\le 1+\delta$.

On the other hand several problems studied by the computer science community
also lead to the problem of explicit polynomial approximation
for functions taking a finite number of values (especially boolean functions).

Sherstov \cite{Sherrobust} showed that every bounded polynomial on the Boolean
hypercube can be made robust to noise in the inputs with only a
constant-factor increase in degree. That is just the problem of polynomial
approximation of some  function $g$  that takes constant values on the system
of cubes of side length $2/3$ with centers at all the points from $\{0,1\}^d$
(in fact $g$ agrees with a polynomial on $\{0,1\}^d$). 
This work was motivated by an earlier paper~\cite{BNRW} where authors justified
the importance of robust polynomials for the quantum algorithms.
Although the approximation takes place in the $d$-dimensional
space, the core of their method uses univariate polynomial approximation of the
signum function. See also~\cite{ShHarSubsp}.

There are also more recent results on quantum computations that
also make use the approximation of signum, see~\cite{LinTong} and references
therein.

We also mention some applications of this topic to the
problems on principal component projections and stable matrix
approximations (see \cite{Matr}, \cite{Matr2}). The boundedness of approximating
polynomials is important here (hence the notion of $E_n^*$ is relevant).

\paragraph{Known results.}
If follows from the mentioned results of Fuchs that
$$
A_1n^{-\frac12-c}\exp(-\alpha n) \le E_n(I_1,\ldots,I_s) \le 
A_2n^{-\frac12}\exp(-\alpha n),
$$
for any disjoint segments in $\R$; here $A_1,A_2$ and $\alpha$ depend on the
geometry of the segments. (We believe that the extra factor $n^{-c}$ is not
required when we approximate locally constant functions.)

The results of Hausson~\cite{Hass} imply that for the following system of
rotated segments in $\mathbb{C}$ we have
\begin{equation}
    \label{Hasson}
\limsup_{n\to\infty}
    E_n(I_1,\ldots,I_s)^{1/n}=\sqrt[s]{\frac{b^{s/2}-a^{s/2}}{b^{s/2}+a^{s/2}}},\quad
    I_k := e^{2\pi i \frac{k}{s}}[a,b].
\end{equation}

The case of $E_n$ for two segments boils down to the
polynomial approximation of the sign function:
$E_n(\sign t)_{I_1\sqcup I_2} = E_n(I_1,I_2)$, where $I_1$ and $I_2$ are
separated by zero.
The asymptotics for the best approximation
on two
symmetric segments was given by Eremenko and Yuditskii~\cite{ErYu07}. The case of
two arbitrary segments was done in \cite{ErYu11}. We formulate their result
in the symmetric case. Let $K=[-A,-1]\cup [1,A]$, $A>1$, then we
have the asymptotics for
$L_n:=E_n(\sign t)_K=E_n([-A,-1],[1,A])$:
\begin{equation}
\label{2segm_asymp}
L_{2m+2}=L_{2m+1}\sim \frac{\sqrt{2}(A-1)}{\sqrt{\pi A}}(2m+1)^{-1/2}\left(\frac{A-1}{A+1}\right)^m.
\end{equation}

The well--known results of K.G. Ivanov and V. Totik~\cite{IvaTot90} give the following bounds:
\begin{multline}
    \label{totik_sign}
    \exp(c_1n\log(1-\alpha)) \le E_n([-1,-\alpha],[\alpha,1]) \le \\
    \le E_n^*([-1,-\alpha],[\alpha,1]) \le \exp(c_2n\log(1-\alpha)),
\end{multline}
provided that $n\ge C_1$ and $n\log(1-\alpha)\le -C_2$.

\paragraph{Notation.}
In order to provide explicit bounds we use the following quantities determined by the set of segments
$I_1,\ldots,I_s$:
\begin{itemize}
    \item number of segments $s$,
    \item maximum ``radius'' $\delta := \frac12 \max |I_i|$,
    \item minimum distance between segments: $\sigma := \min\limits_{i\ne j}
        \min\limits_{\substack{x\in I_i\\y\in I_j}}|x-y|$,
    \item the diameter $D := \diam (I_1\cup\ldots\cup I_s)$.
\end{itemize}
We will obtain estimates on $E_{n}(I_1,\ldots,I_s)$ in terms of
$s,\delta,\sigma,D$ and we estimate the $\eps_n(Y;\delta)$ quantity in terms of
$s,\delta$, $\hat{\sigma}:=\min_{i\not=j}|y_i-y_j|$, $\hat{D}:=\diam Y$.

\paragraph{Our contribution.}
We use three classical methods of approximation: Bernstein polynomials, Jackson
theorem and Lagrange interpolation in Newton form. Therefore we provide
explicit approximating polynomials and give bounds valid for all $n$ for the
approximation error ($E_n$ and $E_n^*$) with 
explicit dependence on the geometry of the system $I_1,\ldots,I_s$.

There are at least two interesting asymptotic regimes  for the
quantity $E_n(I_1,\ldots,I_s)$:
\begin{enumerate}
    \item[I.] $\sigma/D$ is small, i.e. some of the segments are very close;
    \item[II.] $\delta/D$ is small, i.e. when all the
        segments are small (the dependence on the number of segments is important in this case).
\end{enumerate}

Case I. 
In Section~\ref{sec_main} we obtain the upper bound
$$
E_n^*(I_1,\ldots,I_s)\le2\exp(-cn\sigma/D).
$$
This agrees with the bound~\eqref{totik_sign} for two segments $I_1$, $I_2$ of
equal length: indeed, we have $\sigma/D=\alpha$ and if this quantity is bounded away from $1$, then ~\eqref{totik_sign} gives
$$
\exp(-c_1n\sigma/D)\le E_n(I_1,I_2) \le E_n^*(I_1,I_2) \le \exp(-c_2n\sigma/D).
$$

Note also that the asymptotics~\eqref{2segm_asymp} when 
$\sigma/D=1/A$ is small (and fixed) gives
$$
L_n\asymp n^{-1/2}\exp(-c_1 n\sigma/D),\quad c_1\approx 1.
$$

Case II. Let us look at the asymptotics~\eqref{2segm_asymp} when 
$A=1+2\delta$, where $\delta>0$ is small (but fixed). Then
\begin{equation}
\label{2segm_smalldelta}
L_n \asymp n^{-1/2}(c_2\delta)^{n/2},\quad c_2\approx 1.
\end{equation}
We prove the upper bound that
generalizes this to the case $s>2$:
$$
    E_n(I_1,\ldots,I_s) \le
    \left(C(s,u)\frac{\delta}{D}\right)^{\lfloor\frac{n+1}{s}\rfloor},
    \quad\mbox{if $\sigma/D\ge u>0$.}
$$
Note that this bound is sharp in the following sense. If we replace $s$ by a (fixed)
number $r<s$ then the inequality does not hold  for sufficiently small
$\delta$; namely, $\delta/D\le c(r,s,u)$.

Therefore our estimates capture the right dependence on the parameters in these cases
and generalize known results.

As a result, we achieve amplification of the approximation for
functions taking an arbitrary (finite) set of values.
We hope that our estimates can be applied  to some problems in approximation theory and computer science that were described above.

\paragraph{Acknowledgments.}
The starting point for our research was the relevant
property of the Bernstein polynomials that we learned from Vladimir
Sherstyukov: these polynomials provide good approximations on the segments where the
function is linear. We would like to thank the anonymous referee for his/her precise comments  and suggestions which improved the presentation.

This research was carried out at Lomonosov Moscow State University, Moscow Center of Fundamental and Applied Mathematics  with the financial support of the Russian Science Foundation (Grant No. 22- 11-00129).

\section{Approximation by Bernstein polynomials}

We make use of the Bernstein polynomials
$$
B_n(f,x)=\sum_{k=0}^n \binom{n}{k} f(k/n)x^k(1-x)^{n-k}. 
$$
Note that $B_n(f,x)=\E(f(\xi_{n,x}/n))$, where $\xi_{n,x}$ is the random variable 
with the binomial $B(n,x)$ distribution, i.e. $P(\xi_{n,x} =
m)=\binom{n}{m}x^m(1-x)^{n-m}$.

We mention two properties of the Bernstein operators that are important for us: $B_n$ is
positive ($B_nf\ge 0$ if $f\ge0$), and $B_n$ preserves constants.

For $p,q\in[0,1]$ we recall the definition of the divergence function
$$
H(p\|q) :=  p\ln\frac{p}{q}+(1-p)\ln\frac{1-p}{1-q}.
$$

\begin{statement} 
    \label{stm_berni}
    Let $f$ be a function defined on $[0,1]$ such that $|f|\le M$.
    Assume that $f\equiv \mathrm{const}$ on an interval $(a,b)$, $0<a<b<1$.
    Then for any
    $x\in(a,b)$ and $n\in\mathbb N$ we have:
    \begin{equation}
        \label{berni_stmt}
        |B_n(f,x)-f(x)| \le 2M(e^{-nH(a\|x)}+e^{-nH(b\|x)}).
    \end{equation}

    If $f\equiv\mathrm{const}$ on $[0,b)$ then the first term is missing and for
    any $x\in[0,b)$ and $n\in\mathbb N$ we have:
    $$
        |B_n(f,x)-f(x)| \le 2Me^{-nH(b\|x)}.
    $$
    Analogously for the case of interval $(a,1]$.
\end{statement}

\begin{proof}
Let us consider the case $0<a<b<1$.
Since Bernstein operator preserves constants, we may assume that $f\equiv0$ on
$(a,b)$ and $\|f\|_\infty\le 2M$.

    Let $x\in (a,b)$.
We estimate $|B_n(f,x)|=|\E(f(\xi_{n,x}/n))|$
simply as $2M(\P(\xi_{n,x} \le an) + P(\xi_{n,x} \ge bn))$.

    Let us bound the probability $\P(\xi_{n,x} \le an)$
    using the Chernoff--Hoeffding~\cite{Hf63} bound:
$$
    \P(\xi_{n,x} \le an) \le \exp(-nH(a\|x)).
$$
We deal with $P(\xi_{n,x} \ge bn)$ in a similar way. 

    In the case when $f\equiv\mathrm{const}$ on $[0,b)$ we
    have to deal only with $P(\xi_{n,x} \ge bn)$.
\end{proof}

\begin{remark}
    Using sharp probability estimates by Serov, Zubkov~\cite{SZ12}, one can obtain for
    fixed $0<a<x<b<1$ that
    $$
    |B_n(f,x)-f(x)| \le C(a,b,x)Mn^{-1/2}(e^{-nH(a\|x)}+e^{-nH(b\|x)}).
    $$
    More precise inequality was announced in~\cite[Theorem 20]{TSP}.
\end{remark}

\paragraph{The case of two segments.}

From Statement~\ref{stm_berni} one can immediately get a bound for
$E_n^*([0,h],[1-h,1])$, $0<h<1/2$. Indeed,
    if we wish to approximate $y_1\mathbf{1}_{[0,h]}+y_2\mathbf{1}_{[1-h,1]}$, we can
    apply the Statement to the function $f=y_1 \mathbf{1}_{[0,1/2)} +
    y_2\mathbf{1}_{(1/2,1]}$. For the polynomial $P := B_nf$ we have the bound
    $$
    \|f-P\|_{C([0,h]\bigcup [1-h,1])} \le 2\exp(-nH(\frac12\|h)) =
    2(4h(1-h))^{n/2}.
    $$
It is obvious that $|P|\le\max(|y_1|,|y_2|)$ on $[0,1]$.
Hence
\begin{equation}
    \label{h_bound}
E_n^*([0,h],[1-h,1]) \le 2(4h(1-h))^{n/2}.
\end{equation}

\begin{corollary}
   For any disjoint segments $I_1,I_2$ of equal length we have
    \begin{equation}
        \label{two_segments_bound}
    E_n^*(I_1,I_2) \le
    2\left(4\frac{2\delta}{D}(1-\frac{2\delta}{D})\right)^{n/2}
        = 2\left(1-\frac{\sigma^2}{D^2}\right)^{n/2}.
    \end{equation}
\end{corollary}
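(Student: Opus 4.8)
The plan is to reduce the general equal-length configuration to the canonical pair $[0,h],[1-h,1]$ already handled in~\eqref{h_bound}, exploiting the affine invariance of $E_n^*$. First I would observe that $E_n^*$ is unchanged under any invertible affine substitution $t=\phi(u)=\alpha u+\beta$ of the variable: such a map carries $\mathcal P_n$ bijectively onto itself, sends $\conv K$ to $\conv\tilde K$ where $\tilde K=\phi^{-1}(K)$, preserves all the relevant sup-norms, and carries $\sum_i y_i\mathbf 1_{I_i}$ to $\sum_i y_i\mathbf 1_{\tilde I_i}$ with $\tilde I_i=\phi^{-1}(I_i)$. Hence $E_n^*(I_1,I_2)=E_n^*(\tilde I_1,\tilde I_2)$, and we are free to normalize the picture.

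Next I would pick the affine map sending $\conv(I_1\cup I_2)$ onto $[0,1]$. Writing the two segments (each of common length $2\delta$, separated by the gap $\sigma$) as $[0,2\delta]$ and $[2\delta+\sigma,4\delta+\sigma]$, the diameter is $D=4\delta+\sigma$, so this map scales by $1/D$. Setting $h:=2\delta/D$, the left segment becomes $[0,h]$, and since $(2\delta+\sigma)/D=(D-2\delta)/D=1-h$, the right segment becomes $[1-h,1]$. Note $0<h<1/2$ because $D>4\delta$, so the hypothesis of~\eqref{h_bound} is met. Thus the configuration is exactly the canonical one, and~\eqref{h_bound} yields $E_n^*(I_1,I_2)\le 2(4h(1-h))^{n/2}$, which is the first expression in~\eqref{two_segments_bound}.

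Finally I would verify the stated algebraic identity. From $D=4\delta+\sigma$ we get $\sigma/D=1-4\delta/D=1-2h$, whence $1-\sigma^2/D^2=1-(1-2h)^2=4h-4h^2=4h(1-h)$, matching the two sides of~\eqref{two_segments_bound}.

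I do not expect a genuine obstacle here: the real content sits in~\eqref{h_bound}, and what remains is the bookkeeping of the affine reduction. The single point requiring care is checking that the boundedness constraint $\|P\|_{C(\conv K)}\le\|f\|_{C(K)}$ defining $E_n^*$ is preserved under the substitution, since $\conv\tilde K=\phi^{-1}(\conv K)$ and $\|\tilde f\|_{C(\tilde K)}=\|f\|_{C(K)}$. This is exactly what makes the reduction legitimate for the \emph{bounded} quantity $E_n^*$, and hence why the canonical estimate was stated for $E_n^*$ in the first place.
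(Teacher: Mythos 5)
Your proof is correct and takes essentially the same route as the paper: an affine normalization sending $\conv(I_1\cup I_2)$ onto $[0,1]$ reduces to the canonical configuration $[0,h]\cup[1-h,1]$ with $h=2\delta/D$, after which~\eqref{h_bound} and the identity $D=\sigma+4\delta$ give the stated bound. Your extra care in checking that $E_n^*$ is affinely invariant (including the boundedness constraint on $\conv K$) and that $0<h<1/2$ makes explicit what the paper leaves implicit, but it is the same argument.
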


\begin{proof}
We place the segments
inside $[0,1]$ by some suitable translation and multiplication by $D^{-1}$.
We arrive to the setting of the approximation on
    $[0,\frac{2\delta}{D}]\cup[1-\frac{2\delta}{D},1]$ and
    apply~\eqref{h_bound}. The equality in~\eqref{two_segments_bound} holds
    since $D=\sigma+4\delta$.
\end{proof}

\begin{remark}
We can reduce the case of different lengths to the considered one by taking
the smaller segment and enlarging it while keeping $\delta$ and $\sigma$ fixed,
so that the resulting diameter is $D'=\sigma+4\delta$. Then we
apply~\eqref{two_segments_bound} for $\delta$ and $D'$.
\end{remark}

In the case of the approximation amplification for $\{-1,1\}$-valued functions we
have $I_1=[-1-\delta,-1+\delta]$, $I_2=[1-\delta,1+\delta]$, $D = 2+2\delta$, so
$\eps_n(\{-1,1\};\delta) \le 2(4\delta/(1+\delta)^2)^{n/2}$.
Similarly for the $\{0,1\}$-valued functions we have
$\eps_n(\{0,1\};\delta) \le (8\delta/(1+2\delta)^2)^{n/2}$.

The bound~\eqref{two_segments_bound} is poor when $\sigma/D$ is small.
Indeed, it behaves like $\exp(-n(\sigma/D)^2)$, but Theorem 1 (see the next Section) gives an upper bound $2\exp(-cn\sigma/D)$.  We can
compare estimate~\eqref{two_segments_bound} with the best approximation in
the other  regime (case II): when $\delta/D$ is small.

The estimate \eqref{two_segments_bound} gives the following:
$$
E_n([-A,-1],[1,A])\le 2\left(\frac{(A-1)(A+1)}{A^2}\right)^{n/2}.
$$
Let $A=1+2\delta$, where $\delta>0$ is small (but fixed). We recall that the
asymptotics~\eqref{2segm_smalldelta} behaves like $(c_2\delta)^{n/2}$, 
$c_2\approx 1$, whereas our estimate behaves like $\asymp (c\delta)^{n/2}$,
$c\approx 4$.

We conclude that the Bernstein polynomials provide explicit, nonasymptotic
estimates, but they are fairly good only when $\delta$ is small.

\section{Main results}
\label{sec_main}

\begin{theorem}
    \label{th_general}
    For any system of disjoint segments we have
    $$E_n^*(I_1,\ldots,I_s) \le 2\exp(-cn\sigma/D).$$
    In fact, one can take $c=1/30$.
\end{theorem}

We have a similar general bound for the $\eps_n (Y,\delta)$. Namely, for any
finite $Y\subset \mathbb{R}$ with characteristics $\hat{D},\hat{\sigma}$ we
apply the last estimate and \eqref{eps_en} and get
$$
\eps_n (Y,\delta)\le \frac12\hat{D}\cdot E_n([y_1-\delta,y_1+\delta],\ldots,[y_s-\delta,y_s+\delta])\le
\hat{D}\exp(-cn\sigma/D),
$$
where
$\sigma=\hat{\sigma}-2\delta$ and $D=\hat{D}+2\delta\le 2\hat{D}$. Finally,
$$
\eps_n (Y,\delta)\le \hat{D}\exp(-c_1n(\hat{\sigma}-2\delta)/\hat{D}).
$$ 

\begin{proof}
    Let $D=2$ and $I_1,\ldots,I_s\subset[-1,1]$. First of all we  use the
    Jackson--Favard inequality (see~\cite{DLbook}, p.219)
    $$
    E_n(f)_{[-1,1]} \le \frac{\pi}{2(n+1)}\|f'\|_\infty,
    $$
    that holds for absolutely continuous functions. Let $y_i\in\{-1,1\}$
    (recall the the maximum in~\eqref{def_en_bnd} it attained at such values).
    We consider the piecewise linear continuous extension $f$ of the function $\sum_i y_i\mathbf{1}_{I_i}$ to the
    segment $[-1,1]$ such that $\|f'\|_\infty \le 2/\sigma$.  Pick some
    $0<\tau<1$. We have $\|(1-\tau)f'\|_\infty \le 2(1-\tau)/\sigma$ and
    $\|(1-\tau)f-P\|_{C[-1,1]} \le \tau$ for some polynomial $P$ of degree at most
    $n_0 := \lfloor \pi(1-\tau)/(\sigma\tau)\rfloor$.
    Note that $\|P\|_{C[-1,1]}\le 1$ and $\|f-P\|\le 2\tau$.

    We obtain that
    $E_{n_0}^*(I_1,\ldots,I_s) \le 2\tau$.
    The bound~\eqref{two_segments_bound} gives
    $E_m([-1,-1+2\tau],[1-2\tau,1])\le 2\cdot (4\tau(1-\tau))^{m/2}$.
    If we combine this with~\eqref{en_amplif_bnd}, we will obtain
    that $E_{mn_0}^* \le 2\cdot (4\tau(1-\tau))^{m/2}$.

    Let $n\ge n_0$. Then $mn_0 \le n < m(n_0+1)$ for some $m$ and
    $$
    E_n^* \le E_{mn_0}^* \le 2\cdot (4\tau(1-\tau))^{m/2} \le 2 \cdot
    (4\tau(1-\tau))^\frac{n}{2(n_0+1)}.
    $$
    We have
    $$
    n_0+1 \le \pi(1-\tau)/(\sigma\tau) + 1 =
    \frac{\pi(1-\tau)+\sigma\tau}{\sigma\tau} \le
    \frac{\pi(1-\tau)+2\tau}{\sigma\tau},
    $$
    so,
    $$
    E_n^* \le 2\exp(\frac{n}{2}\cdot
    \frac{\sigma\tau}{\pi(1-\tau)+2\tau}\cdot\ln(4\tau(1-\tau)) =
    2\exp(-c\frac{n\sigma}{2}).
    $$
    In order to optimize constant, we take $\tau=0.15$, then $c=0.034\ldots >
    1/30$.

    Finally, if $n<n_0$, then the right side of the inequality being proved is at least
    $$
    2\exp(-\frac1{30}\; \frac{n_0 \sigma}{2}) \ge 2\exp(-\frac{1}{30}\cdot
    \frac{\pi(1-0.15)}{0.15}\cdot\frac12) > 1.
    $$

    The case of arbitrary $D$ follows after a suitable linear transform.
\end{proof}

Our proof is a generalization of the arguments from~\cite{DGJSV}.

\paragraph{The case of small $\delta$.}

In the case of small $\delta$ we may expect better bounds than $\exp(-cn)$, e.g.
for two segments the dependence on $\delta$ and $n$ is like $\delta^{n/2}$. A similar estimate is  true for $s>2$.

\begin{theorem}
\label{generalC_theorem}
    For any set of points $\mathcal Z = \{z_1,\ldots,z_s\}\subset\mathbb C$ with $|z_i-z_j|\ge 1$, complex numbers $w_1,\ldots,w_s$,
    $|w_i|\le 1$, and $n\in\mathbb N$, there is a complex polynomial $P\in
    \mathcal{P}_{ns-1}^{\mathbb C}$, such that
    for any $\delta < 1/2$ we have
$$
    \max_{i=1,\ldots,s} \max_{|z-z_i|\le\delta} |P(z)-w_i| \le
    2(s-1)(A(\mathcal Z)\delta)^{n},
$$
    where $A(\mathcal Z) := 2(1+2\diam Z)^{s-1}$.
    If all $w_i,z_i \in \mathbb{R}$, then the polynomial $P$ is in fact from $
    \mathcal{P}_{ns-1}$, i.e. is real.
\end{theorem}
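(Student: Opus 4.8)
The plan is to take $P$ to be the Hermite interpolation polynomial of the locally constant function, matching the value $w_i$ together with vanishing derivatives of orders $1,\ldots,n-1$ at each node $z_i$. This imposes $ns$ conditions and determines a unique $P\in\mathcal P_{ns-1}^{\mathbb C}$; writing it in Newton (divided-difference) form makes the construction explicit and shows at once that real data $z_i,w_i\in\R$ produce real divided differences, hence a real $P\in\mathcal P_{ns-1}$, which settles the last assertion. Since the first $n-1$ derivatives of $P$ at $z_{i_0}$ agree with those of the constant $w_{i_0}$, the difference $P(z)-w_{i_0}$ vanishes to order $n$ at $z_{i_0}$, so on $\{|z-z_{i_0}|\le\delta\}$ it is automatically $O(\delta^n)$; the whole game is to control the implied constant.

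For the remainder I would use the residue form of the Hermite error. With $\omega(\zeta):=\prod_{i=1}^s(\zeta-z_i)^n$ one has, for every $z$ with $|z-z_{i_0}|\le\delta$,
$$w_{i_0}-P(z)=\sum_{i\ne i_0}(w_i-w_{i_0})\,\mathrm{Res}_{\zeta=z_i}\,\frac{\omega(z)}{(\zeta-z)\,\omega(\zeta)}.$$
Two features of this identity match the target exactly: the term $i=i_0$ is absent because reproduction of constants by the interpolant forces $\sum_i\mathrm{Res}_{\zeta=z_i}=-\mathrm{Res}_{\zeta=z}=-1$, leaving only $s-1$ summands, and each coefficient obeys $|w_i-w_{i_0}|\le 2$. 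This accounts for the prefactor $2(s-1)$ and reduces the theorem to showing that every residue is at most $(A(\mathcal Z)\delta)^n$.

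To estimate a single residue at $z_i$ ($i\ne i_0$) I would first peel off $\omega(z)$: the factor $(z-z_{i_0})^n$ contributes $\le\delta^n$, while each remaining factor satisfies $|z-z_j|\le\delta+\diam\mathcal Z$, so $|\omega(z)|\le\delta^n(\delta+\diam\mathcal Z)^{n(s-1)}$. The residue itself is an $(n-1)$-st derivative of a function holomorphic on a disc of radius $\ge 1-\delta$ about $z_i$ (the nearest obstruction being another node, at distance $\ge 1$, or the pole at $z$, at distance $\ge 1-\delta$), so I would bound it by a Cauchy estimate on a circle $|\zeta-z_i|=\rho$, using $|\zeta-z_j|\ge 1-\rho$ for $j\ne i$. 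Choosing $\rho$ is the crux: one wants $\rho$ large to shrink the weight $\rho^{-(n-1)}$, yet bounded away from $1-\delta$ so the contour stays off the pole at $z$. The normalisation $|z_i-z_j|\ge 1$ together with $2\delta<1$ is exactly what makes the per-node factor collapse to $1+2\diam\mathcal Z$ (with $\rho=\tfrac12$ one gets $(\delta+\diam\mathcal Z)/(1-\rho)=2\delta+2\diam\mathcal Z<1+2\diam\mathcal Z$), yielding $A(\mathcal Z)^n=[2(1+2\diam\mathcal Z)^{s-1}]^n$ after the $s-1$ nodes are accounted for.

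The main obstacle is pinning the constant down to exactly $A(\mathcal Z)=2(1+2\diam\mathcal Z)^{s-1}$ uniformly for all $\delta<1/2$. A fixed radius $\rho=\tfrac12$ attains the right limiting constant but makes the Cauchy estimate degenerate as $\delta\uparrow\tfrac12$, where the contour approaches $z$; a safe radius $\rho<\tfrac12$ avoids this but inflates $\rho^{-(n-1)}$ and overshoots $A^n$ by a factor like $(4/3)^n$ when $s=2$. I expect the clean route is to run the estimate through the Newton recursion rather than a single contour: at the $k$-th step the corrector of degree $\le n-1$ is fixed by the Hermite data at $z_k$ divided by $\prod_{j<k}(z-z_j)^n$, whose value at $z_k$ has modulus $\ge 1$, so all Cauchy estimates are taken on circles about the nodes and never involve the evaluation point $z$. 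Propagating the bound through the $s-1$ correction steps, each contributing one factor $1+2\diam\mathcal Z$, should give the stated constant with no loss and for every $\delta<1/2$.
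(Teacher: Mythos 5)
Your construction is, in substance, the paper's: the polynomial you describe is exactly the Hermite interpolant with multiplicity $n$ at each node (the paper realizes it as the limit of Lagrange interpolants at $n$ distinct points within $\gamma$ of each $z_j$, $\gamma\to0$), and your residue identity is correct --- indeed $\mathrm{Res}_{\zeta=z_i}\,\omega(z)/((\zeta-z)\omega(\zeta))=-P_i(z)$, where $P_i$ are the fundamental Hermite polynomials, and your relation $\sum_i\mathrm{Res}=-1$ is precisely the paper's partition of unity~\eqref{prop1}. The genuine gap is in the quantitative step, and you diagnose it yourself without closing it. Your decomposition spends the entire prefactor $2(s-1)$ on $|w_i-w_{i_0}|\le2$ and on the number of terms, so each residue must be bounded by $(A(\mathcal Z)\delta)^n$ with constant exactly $1$, uniformly for $\delta<1/2$. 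The single-contour Cauchy estimate with $\rho=\tfrac12$ gives only $\frac{1}{1-2\delta}\,(A(\mathcal Z)\delta)^n$, because the pole at $\zeta=z$ lies at distance $\ge 1-\delta-\rho=\tfrac12-\delta$ from the contour; the slack in $(2\delta+2\diam\mathcal Z)^{n(s-1)}<(1+2\diam\mathcal Z)^{n(s-1)}$ cannot absorb this factor, since the former ratio tends to $1$ as $\delta\uparrow\tfrac12$ while $\frac1{1-2\delta}$ diverges. So, as written, your argument proves the theorem only with a weaker constant, or for $\delta$ bounded away from $\tfrac12$.

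Your proposed repair --- ``run the estimate through the Newton recursion'' --- is in fact the paper's actual proof, but you leave it as an expectation rather than an argument, and the engine that makes it work is missing. The key lemma is a bound on the divided differences of the $\{0,1\}$-valued data: any divided difference over consecutive interpolation points is either zero (data constant there) or, at every step of the recursion~\eqref{divdiff}, has a denominator joining the two clusters and hence of modulus $\ge1$ (resp.\ $\ge 1-2\gamma$ for the perturbed nodes), whence by induction $|f_s[\zeta_{j+1},\ldots,\zeta_{j+k}]|\le 2^{k-1}(1-2\gamma)^{1-k}$. With this, only the last $n$ terms of the Newton form survive, each carrying the factor $|z-z_{i_0}|^n\le\delta^n$, the remaining factors being at most $\diam\mathcal Z+\delta$; summing this geometric-type series and using $\delta<\tfrac12$ to replace $\diam\mathcal Z+\delta$ by $\diam\mathcal Z+\tfrac12$ yields exactly $(2\delta)^n(2\diam\mathcal Z+1)^{n(s-1)}=(A(\mathcal Z)\delta)^n$. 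Note that the constant emerges from this global bookkeeping, not from a clean ``one factor $1+2\diam\mathcal Z$ per correction step'' as you assert; moreover your sketch would still need either a confluent divided-difference calculus or the paper's perturb-and-pass-to-the-limit device. Your treatment of the real case (real data give real divided differences, hence real $P$) is fine, but until the Newton-form estimate is actually carried out, the stated bound is not established.
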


Note that this polynomial does not depend on $\delta$.

\begin{corollary}
    \label{cor_delta}
    Let $s\in\mathbb N$, $u>0$. For any system of disjoint segments
    $I_1,\ldots,I_s$ with $\sigma/D \ge u$
    we have
    \begin{equation}
        \label{small_delta_En}
    E_{n}(I_1,\ldots,I_s) \le
    \left(C(s,u)\frac{\delta}{D}\right)^{\lfloor\frac{n+1}{s}\rfloor},
    \quad\mbox{$n\in\mathbb N$.}
    \end{equation}
 \end{corollary} 
 
\begin{proof} Let $m=\lfloor\frac{n+1}{s}\rfloor$, so $ms-1\le n$.
If $m=0$, then the inequality for $E_n$ is obvious; let $m\ge1$.
W.l.o.g. $D=1$; hence $\sigma\ge u$. We shall prove that
$E_{ms-1}(I_1,\ldots,I_s) \le (C(s,u)\delta)^m$.

    We  have $
    E_{ms-1}(I_1,\ldots,I_s)=E_{ms-1}(\tilde{I}_1,\ldots,\tilde{I}_s)$ where
    $\tilde{I}_j:=I_j/u$.  We apply the Theorem  to the points
    $z_j$~--- the
    centers of the segments $\tilde{I}_j$ and arbitrary  $w_j\in [-1,1]$ and
    write
    $$
    E_{ms-1}(\sum_{i=1}^s w_i\mathbf{1}_{\tilde{I}_i})_{\tilde{I}_1\sqcup\ldots \sqcup \tilde{I}_s}\le
    2(s-1)\left(A(\{z_j\}_{j=1}^s)\tilde{\delta}\right)^m,
    \quad\mbox{if $\tilde\delta<1/2$,}
    $$
   where $\tilde{\delta}=\delta/u$. The corollary easily follows
   from this estimate when $\tilde\delta < 1/2$. The
   opposite case is handled by taking a sufficiently large $C$.
  \end{proof}  
Theorem \ref{generalC_theorem} implies  also an  estimate for the $\eps_n$--quantity.
\begin{corollary}
    Let $s\in\mathbb N$, $u>0$. For any set $Y=\{y_1,\ldots,y_s\}\subset\R$
    with $\hat\sigma/\hat D \ge u$ we have
$$
\eps_n(Y;\delta) \le
    \frac{\hat D}{2}\left(C(s,u)\frac{\delta}{\hat D}\right)^{\lfloor\frac{n+1}{s}\rfloor},
    \quad\mbox{$n\in\mathbb N$.}
$$
\end{corollary}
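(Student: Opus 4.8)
The plan is to deduce this corollary from the bound \eqref{small_delta_En} on $E_n$ via the elementary relation \eqref{eps_en}. Setting $I_j := [y_j-\delta,\,y_j+\delta]$, inequality \eqref{eps_en} reads
\[
\eps_n(Y;\delta) \le \frac{\hat D}{2}\,E_n(I_1,\ldots,I_s),
\]
so everything reduces to estimating $E_n$ for this concrete system of segments. For $I_1,\ldots,I_s$ the radius parameter is exactly $\delta$, the diameter is $D = \hat D + 2\delta$, and the separation is $\sigma = \hat\sigma - 2\delta$. Since $D \ge \hat D$, once a bound of the shape $E_n \le (C\,\delta/D)^{\lfloor(n+1)/s\rfloor}$ is available, I may replace $\delta/D$ by the larger quantity $\delta/\hat D$ to arrive at exactly the form demanded by the statement.

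The one genuine point of care is that \eqref{small_delta_En} requires its hypothesis $\sigma/D \ge u'$ to hold with a threshold $u'$ depending only on $u$ and $s$, whereas here $\sigma/D = (\hat\sigma-2\delta)/(\hat D+2\delta)$ depends on $\delta$. I would therefore split on the size of $\delta$. When $\delta \le \hat\sigma/4$ one has $\sigma = \hat\sigma - 2\delta \ge \hat\sigma/2$ and, using $\hat\sigma \le \hat D$, also $D = \hat D + 2\delta \le \tfrac32\hat D$; hence $\sigma/D \ge \hat\sigma/(3\hat D) \ge u/3$. Thus \eqref{small_delta_En} applies to $I_1,\ldots,I_s$ with threshold $u' = u/3$, giving $E_n(I_1,\ldots,I_s) \le (C(s,u/3)\,\delta/D)^{\lfloor(n+1)/s\rfloor}$, and the reduction above yields the claimed estimate with constant $C(s,u/3)$ in this range (including the trivial exponent $\lfloor(n+1)/s\rfloor = 0$, where \eqref{small_delta_En} gives $E_n \le 1$ and hence $\eps_n \le \hat D/2$).

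The complementary range $\delta > \hat\sigma/4$ is handled by a trivial bound together with an enlargement of the constant, exactly as in the proof of \eqref{small_delta_En}. Taking the constant polynomial $P \equiv (y_1+y_s)/2$ shows $\eps_n(Y;\delta) \le \hat D/2$ for every $n$. On the other hand $\delta > \hat\sigma/4$ forces $\delta/\hat D > \hat\sigma/(4\hat D) \ge u/4$, so as soon as $C(s,u) \ge 4/u$ we have $C(s,u)\,\delta/\hat D > 1$ and therefore
\[
\frac{\hat D}{2}\left(C(s,u)\frac{\delta}{\hat D}\right)^{\lfloor(n+1)/s\rfloor} \ge \frac{\hat D}{2} \ge \eps_n(Y;\delta)
\]
for every $n$. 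Choosing $C(s,u) := \max\{C(s,u/3),\,4/u\}$ reconciles the two ranges and finishes the proof. I expect no difficulty beyond this bookkeeping: the substantive content is entirely inherited from \eqref{small_delta_En}, and the only thing to verify is that its geometric hypothesis survives uniformly in $\delta$, which is precisely what the split at $\delta = \hat\sigma/4$ secures.
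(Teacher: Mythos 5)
Your proof is correct and follows exactly the route the paper intends: the paper states this corollary without a separate proof because, just as in its derivation of the $\eps_n$ bound after Theorem~\ref{th_general}, it is the combination of~\eqref{eps_en} with~\eqref{small_delta_En} applied to the segments $[y_j-\delta,y_j+\delta]$ (where $\sigma=\hat\sigma-2\delta$, $D=\hat D+2\delta$), with the range of $\delta$ where the hypothesis $\sigma/D\ge u'$ degenerates absorbed into the constant $C(s,u)$ --- precisely your split at $\delta=\hat\sigma/4$, mirroring the paper's own ``the opposite case is handled by taking a sufficiently large $C$.'' Your bookkeeping (the uniform threshold $u/3$, the monotone replacement of $\delta/D$ by $\delta/\hat D$, and the trivial bound $\eps_n\le\hat D/2$ via the constant polynomial $(y_1+y_s)/2$) is sound.
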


The only tool we need to prove Theorem is the Lagrange polynomial in the Newton
form. Given a function $f$ and a sequence of different points $(\zeta_i)_{i=1}^N$, there
exists a unique polynomial $P\in\mathcal P_{N-1}^{\mathbb C}$ such that
$P(\zeta_i)=f(\zeta_i)$, $i=1,\ldots,N$. Moreover (see~\cite{DLbook}, p.120) we have,
\begin{multline}
    \label{newton}
    P(z) = f[\zeta_1] + f[\zeta_1,\zeta_2](z-\zeta_1) + f[\zeta_1,\zeta_2,\zeta_3](z-\zeta_1)(z-\zeta_2) + 
\ldots \\ + f[\zeta_1,\ldots,\zeta_N](z-\zeta_1)\cdots(z-\zeta_{N-1}).
\end{multline}
The divided differences may be defined recursively as $f[z]:=f(z)$, and 
\begin{equation}
    \label{divdiff}
f[\zeta_1,\ldots,\zeta_k] :=
    \frac{f[\zeta_2,\ldots,\zeta_k]-f[\zeta_1,\ldots,\zeta_{k-1}]}{\zeta_k-\zeta_1}.
\end{equation}

Let us proceed to the proof of  Theorem \ref{generalC_theorem}.
\begin{proof}
    It is enough to construct a sequence of polynomials $P_1,\ldots,P_s$ with two properties:
    \begin{equation}\label{prop1}
    P_1+P_2+\ldots+P_s \equiv 1,
    \end{equation}
    \begin{equation}\label{prop2}
         \max_{|z-z_j|\le\delta}|P_i(z)| \le \eps,\quad\mbox{for all $i\ne j$}.
    \end{equation}
    Then $|P_i(z)-1|\le (s-1)\eps$ if $|z-z_i|\le\delta$. Given
    $\{w_i\}$, we take $P := \sum_{i=1}^s w_i P_i$ and obtain
    $$
    |P(z)-w_i| \le |w_i|\cdot|P_i(z)-1| + \sum_{j\ne i}|w_j|\cdot|P_j(z)| \le
    2(s-1)\eps,\quad\mbox{if $|z-z_i|\le\delta$.}
    $$

    Let $f_i$ be the function such that $f_i(z)=1$ on the disk $|z-z_i|<1/2$ and
    $f_i(z)=0$ on  other disks $|z-z_j|<1/2$, $j\ne i$. We
    could define the polynomial
    $P_i$ simply as the interpolation polynomial for $f_i$ at points
    $z_1,\ldots,z_s$ with multiplicity $n$. However, to avoid multiplicities we
    will take $n$
    different points in the $\gamma$-neighborhood of each $z_j$, interpolate
    $f_i$ at these points and take the limit as $\gamma\to0$.

    Fix some small $\gamma>0$. For each $i=1,\ldots,s$ we take $n$ different points
    $z_i^m$, $|z_i^m-z_i|\le\gamma$, $m=1,\ldots,n$.
    Let $Q_i\in\mathcal P_{ns-1}^{\mathbb C}$ be the interpolation polynomial for the function $f_i$ at the set
    of $ns$ points $\{z_i^m\}$.  The property~\eqref{prop1} for $\{Q_i\}$
    follows from the fact that $Q_1+\ldots+Q_s$ is a polynomial of degree at
    most $ns-1$ and attains value $1$ at $ns$ points.

    Let us bound, say, $|Q_s(z)|$. We arrange the
    interpolation points into the sequence
    $$
    (\zeta_1,\ldots,\zeta_{ns}) :=
    (z_1^1,z_1^2,\ldots,z_1^n,z_2^1,\ldots,z_2^n,\ldots,z_s^n).
    $$
        Note that the points are divided into two groups:
        $\{\zeta_k\colon k \le n(s-1)\}$, where $f_s(\zeta_k)=0$, and
        $\{\zeta_k\colon k > n(s-1)\}$, where $f_s(\zeta_k)=1$.

    Using~\eqref{newton}, we obtain
    \begin{equation}
        \label{explicit}
    Q_s(z) =
    \sum_{k=1}^{ns}f_s[\zeta_1,\ldots,\zeta_k]\prod_{j=1}^{k-1}(z-\zeta_j).
    \end{equation}
    It is clear that the divided differences in~\eqref{explicit}
    are zero for $k \le n(s-1)$.

    Let us show that
    $|f_s[\zeta_{j+1},\ldots,\zeta_{j+k}]|\le 2^{k-1}(1-2\gamma)^{1-k}$
    for all (admissible) $j,k$ using  induction on $k$. The case $k=1$ is
    obvious; let $k>1$. If $f_s$ is constant
    on $\{\zeta_{j+1},\ldots,\zeta_{j+k}\}$, then $f_s[\zeta_{j+1},\ldots,\zeta_{j+k}]=0$. Otherwise, this divided difference is
    a fraction~\eqref{divdiff} with the denominator that is at least $1-2\gamma$
    (since $\zeta_{j+1}$ and $\zeta_{j+k}$ are from
    different groups; recall also that
    $|z_i-z_j|\ge 1$), so we can use the bound for $k-1$.

    Hence, for
    $|z-z_i|\le\delta$ with some $i<s$, we can bound the
    sum as
    \begin{multline}
        \label{bnd}
        |Q_s(z)| \le \sum_{n(s-1)<k\le ns} 2^{k-1}(1-2\gamma)^{1-k}(\delta+\gamma)^n
        (D+\delta+\gamma)^{k-1-n} \le \\
        \le (1-2\gamma)^{-ns}2^{ns}(\delta+\gamma)^n(D+1/2+\gamma)^{n(s-1)}.
    \end{multline}
    (Here $D:=\diam\mathcal Z$.)

    Let us define $P_i$ as the limit of $Q_i$ as $\gamma\to0$. The limit exists
    since in~\eqref{explicit} we can take the limits of all the divided
    differences.
    Then~\eqref{prop1} for $P_i$ follows
    from~\eqref{prop1} for $Q_i$. And the bound~\eqref{bnd} in the limiting case
    gives us $|P_s(z)| \le 2^{ns}\delta^n(D+1/2)^{n(s-1)} \le
    (2\delta)^n (2D+1)^{n(s-1)}$. The required bound
    follows.
\end{proof}

\section{Further work}

It is interesting to consider the quantity $E_n$ for disjoint compact sets
$I_1,\ldots,I_s\subset\mathbb C$. In what cases and in what terms can
one write an upper bound?

We obtained precise estimates  for $E_n$ in Case II; what about $E_n^*$?

The quantity $E_n(I_1,\dots, I_s)$  seems to be
interesting in itself. We do not know how to determine the extremal
pattern of the signs $y\in\{\pm1\}^s$ for given segments $I_j$.

One can  impose other additional requirements on the approximation polynomial  that may be important for some applications, e.g.
    $$
    \min(y_j,y_{j+1}) \le P(t) \le \max(y_j,y_{j+1})
    \quad\mbox{for $t$ between $I_j$, $I_{j+1}$}.
    $$


\begin{thebibliography}{XX}




\bibitem[A09]{Alon}
N. Alon,
``Perturbed identity matrices have high rank: Proof and applications'',
\textit{Combinatorics, Probability and Computing} 18.1-2 (2009),  p. 3-15.



\bibitem[A09]{Andr}
V.V. Andrievskii,
``Polynomial approximation of piecewise analytic functions on a compact subset of the real line'',
\textit{J. Approx. Theory}, \textbf{161} (2009) p. 634–644.




\bibitem[AP03]{Pink}
A. Atzmon, A. Pinkus,
``Rank restricting functions'',
\textit{Linear Algebra and its Applications}, 372 (2003) p. 305–323.


\bibitem[Bg19]{Bogat19}
A. Bogatyrëv,
``Recent progress in optimization of multiband electrical filters''.
\textit{Approximation and optimization}, 135–149, Springer Optim. Appl., 145, Springer, Cham, (2019).

\bibitem[Bg99]{Bogat99}
A. Bogatyrëv,
``Effective computation of Chebyshev polynomials for several intervals''.
\textit{Sbornik: Mathematics}  V.190(11) (1999) p.1571--1605.


\bibitem[BNRW07]{BNRW}
H. Buhrman, I. Newman, H. R\"ohrig, R. de Wolf,
``Robust polynomials and quantum algorithms'',
\textit{Theory of Computing Systems}, 40(4) (2007)   p.379-395.



\bibitem[DLbook]{DLbook}
R.A. DeVore, G.G. Lorentz.
\textit{Constructive approximation}. Springer-Verlag, Berlin, 1993.

\bibitem[DGJSV10]{DGJSV}
I.~Diakonikolas, P.~Gopalan, R.~Jaiswal, R.A.~Servedio, E.~Viola,
``Bounded Independence Fools Halfspaces'',
\textit{SIAM J. Computing}, Vol. 39, Iss. 8 (2010).


\bibitem[ET99]{ET}
M. Embree and L.N.~Trefethen,
``Green’s functions for multiply connected domains via Schwarz–Christoffel mapping'',
\textit{SIAM Rev.}, 41(4) (1999) p.721–744.

\bibitem[EY07]{ErYu07} 
A. Eremenko, P. Yuditskii,
``Uniform approximation of $\mathrm{sgn}(x)$ by polynomials and entire functions''
\textit{Journal d'Analyse Mathématique}, Vol. 101(1), (2007) p. 313-324.

\bibitem[EY11]{ErYu11}
A. Eremenko, P. Yuditskii,
``Polynomials of the best uniform approximation to $\mathrm{sgn}(x)$ on two intervals'',
\textit{Journal d'Analyse Mathématique}, Vol. 114(1), (2011) p. 285-315.



\bibitem[FMMS16]{Matr2} 
R. Frostig, C. Musco, Ch. Musco, A. Sidford,
``Principal component projection without principal component analysis'',
\textit{ICML'16: Proceedings of the 33rd International Conference on International Conference on Machine Learning},
Volume 48, (2016), p. 2349--2357.

\bibitem[F80]{Fuchs80}
W.H.J.~Fuchs,
``On Chebyshev approximation on several disjoint intervals'',
\textit{Complex Approximation}, Birkhäuser, Boston, Mass., (1980), pp. 67–74.


\bibitem[G86]{Gluskin}
E.D. Gluskin, ``The octahedron is badly approximated by random subspaces'',
\textit{Funct. Anal.  Appl.}, V.20, pp. 1–16 (1986).

\bibitem[Hs07]{Hass}
M. Hasson,
``The degree of approximation by polynomials on some disjoint intervals in the complex plane'', 
\textit{J. Approx. Theory}, 144, (2007) 119--132.



\bibitem[Hf63]{Hf63}
W.~Hoeffding,
``Probability Inequalities for Sums of Bounded Random Variables'',
\textit{J.  Amer. Stat. Assoc.},
Vol. 58, No. 301 (1963), pp. 13--30.

\bibitem[IT90]{IvaTot90} 
K.G.~Ivanov, V.~Totik,
``Fast Decreasing Polynomials'',
\textit{Constructive Approximation}, V.6 (1990) pp.1--20 .
 


\bibitem[KS10]{KS10}
A.~Klivans, A.~Sherstov,
``Lower bounds for agnostic learning via approximate rank'',
\textit{Comput. complex.}, \textbf{19} (2010), 581--604.




\bibitem[K90]{Kon90}
S. V. Konyagin,
``Parametric approximation of piecewise analytic functions'',
\textit{Math. notes},  V. 48 (1990) p. 1010–1017.



\bibitem[LT]{LinTong}
Lin Lin, Yu Tong,
``Near-optimal ground state preparation'',
\textit{Quantum} 4, 372 (2020).

\bibitem[SSW01]{ShStrWat} 
J. Shen, G. Strang, A. Wathen,
``The potential theory of several intervals and its applications'',
\textit{Appl. Math. Optim.}, 44  (2001)   p. 67-85 .



\bibitem[Sh13]{Sherrobust}
A.A.~Sherstov,
``Making polynomials robust to noise''.
\textit{Theory of Computing}, 9  (2013)   p. 593--615 .
 

 
 
\bibitem[Sh21]{ShHarSubsp}
A.A.~Sherstov,
``The hardest halfspace'',
\textit{Comput. Complex.}, 30(11) (2021)  p.1–85.
 
\bibitem[Schi21]{Schinorm21}
K. Schiefermayr, M. Zinchenko, 
``Norm estimates for Chebyshev polynomials, I'', \textit{J. Approx. Theory}, 265,   (2021)  105561  


\bibitem[Schi17]{Schi2interv}
K. Schiefermayr,
``An upper bound for the norm of the Chebyshev polynomial on two intervals''
 \textit{J.  Math. Anal.  Appl.}, 445, 1,  (2017)    p. 871-883.

\bibitem[SZ12]{SZ12}
A.A.~Serov, A.M.~Zubkov, 
``A Complete Proof of Universal Inequalities for the Distribution Function of
the Binomial Law'',
\textit{Th.  Probab.  Appl.}, Vol. 57, Iss. 3 (2013).










\bibitem[TSP14]{TSP}
I.V.~Tikhonov, V.B.~Sherstyukov, M.A.~Petrosova,
``Polinomy Bernsteina: staroe i novoe'' (in Russian), Itogi nauki, 2014.








\bibitem[W69]{Widom}
H.~Widom,
``Extremal polynomials associated with a system of curves in the complex plane'',
\textit{Adv. Math.}, 3 (1969), pp. 127-232.



\bibitem[AL17]{Matr}
Zeyuan Allen-Zhu, Yuanzhi Li,
``Faster Principal Component Regression and Stable Matrix Chebyshev Approximation'',
\textit{Proceedings of the 34th International Conference on Machine Learning}, PMLR 70, (2017), p.107-115.
 
\end{thebibliography}
\end{document}